\DeclareMathOperator{\sech}{sech}
\newcommand{\Bigabs}[1]{\Bigl\vert #1 \Bigr\vert}
\newcommand{\norm}[1]{\left\Vert #1 \right\Vert}
\newcommand{\R}{\mathbb{R}}
\newcommand{\angles}[1]{\langle #1 \rangle}
\newtheorem{theorem}{Theorem}
\newtheorem{lemma}{Lemma}
\newtheorem*{PWtheorem}{Paley-Wiener Theorem}
\theoremstyle{definition}
\theoremstyle{remark}
\newtheorem{remark}{Remark}
\title[Spatial analyticity for the Beam Equation]{On the persistence of spatial  analyticity for the Beam Equation}
\author[T. T. Dufera] {Tamirat T. Dufera}
  \author [
S. Mebrate] { 
Sileshi Mebrate }
  \author[A. Tesfahun]{Achenef Tesfahun}
\address{Department of Mathematics \\
Nazarbayev University \\
Qabanbai Batyr Avenue 53 \\
010000 Nur-Sultan \\
Republic of Kazakhstan}
\email{achenef@gmail.com}
\address{Department of Mathematics
\\
Adama University of Science and Technology
\\
Ethiopia}
   \email{tamirat.temesgen@astu.edu.et, silenaty2005@gmail.com}
\keywords{Beam equation; Global well-posedness Lower bound; Radius of analyticity; Gevrey spaces}
\subjclass[2010]{35A01, 35Q53}
\begin{document}

    \begin{abstract}
    Persistence of spatial analyticity is studied for solution of the beam equation $ u_{tt} + \left(m+\Delta^2\right) u + |u|^{p-1}u = 0$ on $\R^n \times \R$. In particular, 
for a class of analytic initial data with a uniform radius of analyticity $\sigma_0$, we obtain an asymptotic lower bound 
 $\sigma(t) \ge c/\sqrt t$ on the uniform radius of analyticity $\sigma(t)$ of solution $u(\cdot, t)$, as $t \rightarrow \infty.$
\end{abstract}

\maketitle

\section{Introduction}
This paper is concerned with persistence of spatial analyticity of solutions for
the Cauchy problem of the fourth order wave equation
\begin{equation}\label{NLB}
\left\{
\begin{aligned}
    & u_{tt} + \left(m+\Delta^2\right) u + |u|^{p-1}u = 0,
    \\
    & (u, u_t )|_{t=0}= (u_0 , u_1),
\end{aligned}
\right.
\end{equation}
where $u: \R^{n} \times \R  \longrightarrow \mathbb{R}$, $p \ge
 1$, and $m > 0$. 

 Equations like \eqref{NLB}
are also referred to as Bretherton’s type equations or the beam equation. The original Bretherton equation, written down for $n = 1$ by Bretherton \cite{Breth1964}, arised in the study of weak interactions of dispersive waves. A similar equation for $n = 2$ was proposed in Love \cite{Love1944} for the motion of a clamped plate. Recent developments in arbitrary dimension were established in \cite{L98, L98e, BS07, BS00, LS00}.

Questions such as well-posedness, blow-up in finite time, long time existence, and the existence of uniform bounds for global solutions of \eqref{NLB} are adressed by several authors.
For instance, local well-posedness, scattering, and stability in the energy space $H^{2} (\R^n) \times L^{2} (\R^n) $ was studied by Levandosky in \cite{L98, L98e} and Levandosky and Strauss \cite{LS00}, results which were extended by Pausader \cite{P07,P10}.  Low-regularity global well-posedness was also shown by Zhang \cite{Z10} in dimensions $3 \leq n \leq 7$ in the cubic case for data $ (u_0 , u_1) \in H^{s} (\R^n)  \times H^{s-2} (\R^n) $ satisfying
\[
s > \min \left\{ \frac{n-2}{2},\ \frac{n}{4} \right\}.
\]
We remark that the energy 
\[
E(t)=\frac12 \int_{\R^n} \left( u^{2}_{t}+  (\Delta u)^2  + m u^{2}+  \frac2{p+1}  |u|^{p+1} \right) \ dx
\]
is conserved by the flow of \eqref{NLB}, i.e., $E(t)=\text{const.}$ for all $t$.

In this paper, we shall study the persistence of spatial analyticity for the solution of the Cauchy problem \eqref{NLB}, given initial data in a class of analytic functions. 
By the Paley-Wiener Theorem, the radius of analyticity of a function can be related to decay properties of its Fourier transform. It is therefore natural to take data for \eqref{NLB} in the Gevrey space $G^{\sigma, s}(\R^n)$,  defined by the norm
\[
\| f \|_{G^{\sigma, s}(\R^{n})} = \| \exp(\sigma |\xi|) \langle \xi \rangle^{s} \hat{f} \|_{L^2_\xi (\R^n)}\qquad (\sigma\ge 0),
\]
where $\angles{\xi }= \sqrt{1+ |\xi|^2}$. When 
$\sigma= 0$, this space coincides with the Sobolev space $H^{s}(\R^n)$, with norm
\[
\| f \|_{H^{s}(\R^{n})} = \| \langle \xi \rangle^{s} \hat{f} \|_{L^2_\xi (\R^n)},
\]
while for any $\sigma>0$, any function in
  $G^{\sigma, s}(\R^n)$ has a radius of analyticity of at least $\sigma$ at each point $x\in \R^n$.
This fact is contained in the following theorem, whose proof can be found in \cite{K1976} in the case $s = 0$ and $n = 1$; the general case follows from a simple modification.  

\begin{PWtheorem}
Let $\sigma > 0$ and $s \in \mathbb{R}$.  If $f \in G^{\sigma, s}(\R^{n})$, then $f$ is the restriction to $\R^n$ of a function $F$ which is holomorphic in the strip
\[
    S_{\sigma} = \{ x + iy \in \mathbb{C}^{n}:\ |y| < \sigma \}.
\]
Moreover, the function $F$ satisfies the estimates
\[
    \sup_{|y| < \sigma} \| F (\cdot + i y) \|_{H^{s}} < \infty.
\]
\end{PWtheorem}

Information about the domain of analyticity of a solution
to a partial differential equation (PDE)
can be used to gain a quantitative understanding
of the structure of the equation,
and to obtain insight into underlying physical processes. 
The study of real-analytic solutions to nonlinear PDE
has developed over the last three decades.
Starting with the works of Kato and Masuda \cite{KM1986}
for dispersive wave-type equations, and Foias and Temam \cite{FT1989}
for the Navier-Stokes equations, analytic function
spaces have become popular tools for the study
of a variety of questions connected with nonlinear evolutionary PDE.
In particular, the use of Gevrey-type spaces has given
rise to a number of important results
in the study of long time dynamics of dissipative
equation, such as
estimating the asymptotic degrees of freedom (e.g.,
determining nodes) \cite{OT}, 
approximating the global attractors/inertial
manifolds \cite{JT} and and a rigorous estimate 
of the Reynold's scale \cite{CoDoTi}.

Given a nonlinear dispersive PDE in the independent variables $(t,x)$, consider the Cauchy problem with real analytic initial data at $t=0$. If these data have a uniform radius of analyticity $\sigma_0$, in the sense that there exists a holomorphic extension to the complex strip of width $\sigma_0$, then we ask whether the solution at some later time $t>0$ also has a uniform radius of analyticity $\sigma=\sigma(t)>0$, in which case we would, moreover, like to have an explicit lower bound on $\sigma(t)$. 
Heuristically, the picture one should have in mind is that $\sigma(t)$ is the distance from the $x$-axis to the nearest complex singularity of the holomorphic extension of the solution at time $t$. If at some time $t$ this singularity actually hits the x-axis, then the solution itself suffers a breakdown of regularity. This point of view is the basis for the widely used singularity tracking method \cite{SSF83} in numerical analysis, where a spectral method is used to obtain a numerical estimate of $\sigma(t)$. This estimate can then be used to predict either the formation of a singularity in finite time or alternatively global regularity.  Even in cases where singularity formation does not occur (as is the case for the beam equation), it is still of interest to obtain lower bounds on $\sigma(t)$, as this has implications for the rate of convergence of spectral methods for the equation one is looking at (see \cite{BH07} for an example of this).

The spaces $G^{\sigma, s}(\R^n)$ were introduced by Foias and Temam \cite{FT1989} (see also \cite{KM1986}) in the study of spatial analyticity of solutions to the Navier-Stokes equations, and various refinements of their method have since been applied to prove lower bounds on the radius of spatial analyticity for a number of nonlinear evolution equations \cite{Ferrari1998, Oliver2001, Panizzi2012, HHP2011, HKS2017, HP2012,  Levermore1997, ST2015, ST2017, SD2015, AT2017, AT2019, AT2019-e}. The method used here for proving lower bounds on the radius of analyticity was introduced in \cite{ST2015} in the context of the 1D Dirac-Klein-Gordon equations. This method is based on an approximate conservation laws, and has been applied to prove an
algebraic lower bound (decay rate) of order $t^{-1/\alpha}$ for some $\alpha \in (0, 1]$ on the radius of spatial analyticity of solutions to a number of nonlinear dispersive and wave equations (see e.g.,  \cite{BTT21, ST2015, ST2017, SD2015, AT2017, AT2019, AT2019-e}). The optimal decay rate that can be obtained in this setting is $1/t$, which corresponds to $\alpha=1$ (see e.g.,  \cite{AT2017, AT2019-e, BTT21}). 
 This decay rate is related to the behavior of the exponential weight, $\exp(\sigma |\xi|) $, that sits in the Gevrey norm. More specifically, it stems from the simple estimate
$$\exp(\sigma |\xi|) -1\le ( \sigma |\xi|)^\alpha  \cdot  \exp(\sigma |\xi|)  \qquad (0\le \alpha \le 1),$$
which follows from an interpolation between $\exp r-1\le \exp r $ and $ \exp r-1\le r  \exp r$
for $r\ge 0$.

In the present work, in an attempt to improve  the decay rate obtained so far,
we introduce a modified Gevrey norm\footnote{As far as we now the space $H^{\sigma, s}$ is new to this paper and was not used before. } by
\[
\| f \|_{H^{\sigma, s}(\R^{n})} = \|  \cosh ( \sigma |\xi|) \langle \xi \rangle^{s} \hat{f} \|_{L^2_\xi (\R^n)} \qquad (\sigma\ge 0),
\]
where the exponential weight $\exp(\sigma |\xi|)$ in the Gevrey norm is now replaced by a hyperbolic weight $\cosh(\sigma |\xi|) $. These two weights are equivalent in the sense that
\begin{equation}
\label{cosh:est}
\frac 12 \exp(\sigma |\xi|)    \le \cosh (\sigma |\xi|) \le \exp(\sigma |\xi|).
\end{equation}
Thus, the associated $G^{\sigma, s}$ and $H^{\sigma, s}$--norms are equivalent, i.e.,
\begin{equation}\label{GH-eqiv}
\| f \|_{H^{\sigma, s} (\R^n)} \sim \| f \|_{G^{\sigma, s} (\R^n)},
\end{equation}
and so the statement of Paley-Wiener Theorem still holds for functions in $H^{\sigma, s}$.

Note also that
$H^{0, s}=G^{0, s}=H^{ s}$.
The space $H^{\sigma, s}$, however, has an advantage since $\cosh(\sigma |\xi|) $ satisfies the estimate
\begin{equation}
\label{cosh-1:est}
\cosh ( \sigma |\xi|)-1\le  (\sigma |\xi|)^{2\alpha}  \cdot \cosh ( \sigma |\xi|) \qquad (0\le \alpha \le 1).
\end{equation}
This follows from 
$$\cosh r-1\le \cosh r \quad \text{ and } \quad \cosh r-1\le r^2  \cosh r   \qquad (r\in \R).$$ 
Therefore, in view of \eqref{cosh-1:est}, an application of  our method in the $H^{\sigma, s}$-set up can yield a decay rate of order $t^{-1/2\alpha}$ for some $\alpha \in (0, 1]$ provided that the nonlinear estimates in the derivation of the approximate conservation law can absorb the weight $|\xi|^{2\alpha}$. In this work
we managed to obtain the optimal decay rate of $t^{-1/2}$ (which corresponds to $\alpha=1$) for the Cauchy problem \eqref{NLB}.

We remark that as  a consequence 
 the embedding
\begin{equation}\label{embed1}
H^{\sigma, s}  \subset  H^{s}    \qquad  \   (  \sigma\ge 0)
\end{equation}
 and the existing well-posedness theory in $H^2(\R^n) \times L^{2}(\R^n)$, one can conclude that the Cauchy problem \eqref{NLB}, with $1\le n\le 3$ and $p\ge 1$, has a unique, global-in-time solution, given initial data 
$(u_0, u_1) \in H^{\sigma_0, 2}(\R^n)\times H^{\sigma_0, 0}(\R^n) $ for some $\sigma_0\ge 0$.

\vspace{2mm}

We now state our main theorem.
\begin{theorem}[Lower bound for the radius of analyticity]\label{thm-gwp}
Let $1\le n \le 3$,  $p\ge 1$ is an odd integer and $\sigma_0 > 0$. If
 $(u_0, u_1) \in H^{\sigma_0, 2} (\R^n) \times H^{\sigma_0, 0} (\R^n)$, then for any $T > 0$ the solution of \eqref{NLB} satisfies
\[
 (u, u_t) \in C\left( [0, T]; H^{\sigma, 2}(\R^n) \right) \times C^1\left( [0, T]; H^{\sigma, 0}(\R^n) \right) 
\]
with
\[
\sigma:=\sigma(T) = \min \left\{ \sigma_0,   c T^{-\frac12}\right\}  ,
\]
where $c > 0$ is a constant depending on 
 the initial data norm.
\end{theorem}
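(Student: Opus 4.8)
The plan is to adapt the approximate-conservation-law method of \cite{ST2015} to the hyperbolic-weight space $H^{\sigma,s}$. First I would set up the linear theory: since the principal symbol $m+|\xi|^4$ of the operator in \eqref{NLB} is a radial multiplier, the free beam propagators commute with the Fourier multiplier $\cosh(\sigma|\xi|)$, so the linear flow is an isometry on $H^{\sigma,2}\times H^{\sigma,0}$ and, by \eqref{GH-eqiv} and the Paley-Wiener Theorem, a local well-posedness statement in $H^2\times L^2$ (valid for $1\le n\le 3$, $p\ge1$) transfers verbatim to $H^{\sigma,2}\times H^{\sigma,0}$, with a lifespan depending only on the $H^{\sigma,2}\times H^{\sigma,0}$-norm of the data and, crucially, \emph{uniform in $\sigma$} once that norm is controlled. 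Thus for each fixed $\sigma\in(0,\sigma_0]$ there is a local solution in $H^{\sigma,2}\times H^{\sigma,0}$, and the whole game is to propagate this regularity up to the arbitrary time $T$ by controlling the growth of the modified energy.

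Next I would introduce the \textbf{approximate energy}
\[
E_\sigma(t)=\frac12\int_{\R^n}\Big((\cosh(\sigma|D|)u_t)^2+(\Delta\cosh(\sigma|D|)u)^2+m(\cosh(\sigma|D|)u)^2+\tfrac{2}{p+1}|\cosh(\sigma|D|)u|^{p+1}\Big)\d x,
\]
which at $\sigma=0$ reduces to the conserved energy $E(t)$ and which dominates $\|u_t\|_{H^{\sigma,0}}^2+\|u\|_{H^{\sigma,2}}^2$ up to constants (using $m>0$). Differentiating in $t$ and using the equation, all the \emph{linear} terms cancel exactly — the hyperbolic weight commutes with $m+\Delta^2$ — so that
\[
\frac{d}{dt}E_\sigma(t)=\int_{\R^n}\cosh(\sigma|D|)u_t\cdot\Big(\cosh(\sigma|D|)(|u|^{p-1}u)-|\cosh(\sigma|D|)u|^{p-1}\cosh(\sigma|D|)u\Big)\d x.
\]
The heart of the matter is then the \textbf{commutator estimate} showing that the right-hand side is small: one must bound
\[
\Big|\int \cosh(\sigma|D|)u_t\cdot\big(\cosh(\sigma|D|)(|u|^{p-1}u)-|\cosh(\sigma|D|)u|^{p-1}\cosh(\sigma|D|)u\big)\,dx\Big|\lesssim \sigma^{2}\,P\big(\|u\|_{H^{\sigma,2}},\|u_t\|_{H^{\sigma,0}}\big)
\]
for a polynomial $P$, the gain of $\sigma^2$ coming precisely from \eqref{cosh-1:est} with $\alpha=1$: schematically, on the Fourier side the difference of the two nonlinear terms carries a factor $\cosh(\sigma|\xi|)-\prod\cosh(\sigma|\xi_j|)$ which, after expanding and using $\cosh(a+b)-\cosh a\cosh b=\sinh a\sinh b$ type identities, is controlled by $\sigma^2|\xi_1||\xi_2|\prod\cosh(\sigma|\xi_j|)$, and the two derivatives are absorbed by the $H^2$-regularity ($1\le n\le3$, so $H^2\hookrightarrow L^\infty$ and the product estimates close since $p$ is an odd integer, making $|u|^{p-1}u=u^p$ a genuine polynomial). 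I would isolate this as a separate lemma.

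From the energy differential inequality $\big|\frac{d}{dt}E_\sigma(t)\big|\le C\sigma^2 E_\sigma(t)^{q}$ for some power $q\ge1$ depending on $p$, a bootstrap/continuity argument gives: as long as $E_\sigma(t)\le 2E_\sigma(0)\le 4C_0$ (where $C_0$ bounds the data via \eqref{embed1} and energy conservation at $\sigma=0$, i.e. $E(t)=E(0)$), one has $E_\sigma(t)\le E_\sigma(0)+C\sigma^2\!\int_0^t E_\sigma^q$, and this self-improves to keep $E_\sigma(t)\le 2E_\sigma(0)$ on $[0,T]$ provided $C\sigma^2 T\le c_0$ for a small absolute constant $c_0$ depending only on $C_0$ — that is, provided $\sigma\le cT^{-1/2}$. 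Combined with the local theory this closes the argument and yields exactly $\sigma(T)=\min\{\sigma_0,cT^{-1/2}\}$ with $c$ depending on the data norm. \textbf{The main obstacle} I anticipate is the commutator lemma: one must extract the full quadratic gain $\sigma^2$ (not merely $\sigma^{2\alpha}$ with $\alpha<1$) while the two derivatives it frees up are exactly, and only just, affordable given that the data lies in $H^2$ with $n\le3$; getting the multilinear Fourier bookkeeping to close — in particular handling the frequency configuration where one factor is high and carries both derivatives — without losing the $\sigma^2$ is the delicate point, and it is the reason for the dimensional restriction and for requiring $p$ to be an odd integer so that no issues arise from the non-smoothness of $t\mapsto|t|^{p-1}t$.
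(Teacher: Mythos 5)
Your proposal follows essentially the same route as the paper: the modified energy built from $v_\sigma=\cosh(\sigma|D|)u$, exact cancellation of the linear terms, a commutator lemma extracting the full $\sigma^2$ gain from hyperbolic-cosine product identities (the paper's Lemmas \ref{lm-coshest0}--\ref{lm-coshest}, which bound $|1-\cosh|\xi|\prod_j\sech|\xi_j||$ by $\sigma^2|\xi_1||\xi_2|$ after symmetrization) with the two freed derivatives absorbed by $H^2$ and Sobolev embedding for $n\le 3$, and a growth argument forcing $C\sigma^2 T\lesssim 1$, i.e.\ $\sigma\gtrsim T^{-1/2}$. The only cosmetic difference is that you close the final step with a continuous bootstrap from a differential inequality, whereas the paper iterates the local existence theorem over time steps of length $\delta$; both yield the same conclusion.
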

\noindent In view of the Paley-Wiener theorem and \eqref{GH-eqiv}, this result implies that the solution $u(\cdot, t)$ has radius of analyticity at least $\sigma(t)$ for every $t>0$.

\begin{remark} The result of Theorem \ref{thm-gwp} can be extended to dimension $n=4$ if one uses Strichartz estimates (see \cite{BS07} ) instead of just Sobolev embeddings as we do in this paper. It is also possible to extend the result for $n\ge 5$ but with some upper bound restriction on $p$.
However, we will not pursue these issues here.
\end{remark}

\vspace{2mm}
The first step is to prove the following local-in-time result, where the radius of analyticity remains constant.

\begin{theorem}[Local well-posedness]\label{thm-lwp}
Let $1\le n \le 3$,  $p\ge 1$ is an odd integer and $\sigma > 0$. Given $(u_0, u_1) \in H^{\sigma, 2}(\R^n)\times H^{\sigma, 0} (\R^n)$, there exists a time $\delta>0$ and a unique solution
\[
 (u, u_t) \in C\left( [0, \delta]; H^{\sigma, 2} (\R^n)\right) \times C^1\left( [0, \delta]; H^{\sigma, 0}(\R^n) \right) 
\]
of the Cauchy problem \eqref{NLB} on $\R^n \times [0, \delta]$. Moreover,  
  the existence time is given by 
\begin{equation}\label{delta}
 \delta=c_0 \left(  
  \norm{ u_0 }_{ H^{ \sigma, 2} }+ \norm{ u_1 }_{ H^{ \sigma, 0} }\right)^{-(p-1)}.
 \end{equation}
for some constant $c_0>0$.
\end{theorem}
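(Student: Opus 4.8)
The plan is to rewrite \eqref{NLB} via Duhamel's formula and solve the resulting integral equation by a contraction mapping argument directly in the $\cosh$-weighted space. Set $\omega = \omega(\xi) := \sqrt{m + |\xi|^4}$, which satisfies $\omega \sim \angles{\xi}^2$ uniformly in $\xi$ (with constants depending on $m$), and let $\cos(t\omega)$, $\omega^{-1}\sin(t\omega)$, etc.\ denote the Fourier multiplier operators with the corresponding symbols. Then \eqref{NLB} is equivalent to the integral equation
\[
u(t) = \cos(t\omega)\,u_0 + \frac{\sin(t\omega)}{\omega}\,u_1 - \int_0^t \frac{\sin((t-s)\omega)}{\omega}\,\bigl(|u|^{p-1}u\bigr)(s)\d s =: \Phi(u)(t).
\]
I would run the fixed point in the Banach space $X_\delta$ of pairs $(u, u_t)$ with $u \in C\bigl([0,\delta]; H^{\sigma,2}(\R^n)\bigr)$ and $u_t \in C\bigl([0,\delta]; H^{\sigma,0}(\R^n)\bigr)$, normed by $\fixednorm{(u,u_t)}_{X_\delta} = \sup_{0\le t\le\delta}\bigl(\norm{u(t)}_{H^{\sigma,2}} + \norm{u_t(t)}_{H^{\sigma,0}}\bigr)$, where $u_t = \partial_t\Phi(u)$ is obtained by differentiating the formula above.

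The linear estimates are straightforward. Since $\cosh(\sigma|\xi|)$ is itself a Fourier multiplier in $\xi$, it commutes with all of the operators above; using $|\cos(t\omega)| \le 1$, $|\sin(t\omega)| \le 1$, $\omega^{-1} \lesssim \angles{\xi}^{-2}$ and $\omega \lesssim \angles{\xi}^2$, one sees that $\cos(t\omega)$ is bounded on each $H^{\sigma,s}$, that $\omega^{-1}\sin(t\omega)$ maps $H^{\sigma,0} \to H^{\sigma,2}$, and that $\omega\sin(t\omega)$ maps $H^{\sigma,2} \to H^{\sigma,0}$, all with operator norms uniform in $t \in \R$ and $\sigma \ge 0$. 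Consequently the data contribute at most $C\bigl(\norm{u_0}_{H^{\sigma,2}} + \norm{u_1}_{H^{\sigma,0}}\bigr)$ to $\fixednorm{\Phi(u)}_{X_\delta}$, while each Duhamel term is bounded by $C\delta \sup_{0\le s\le\delta}\bignorm{(|u|^{p-1}u)(s)}_{H^{\sigma,0}}$.

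The crux is the nonlinear estimate. Since $p$ is an odd integer, $|u|^{p-1}u = u^p$, so it suffices to control a $p$-fold product. I would establish the algebra property
\[
\norm{fg}_{H^{\sigma,s}(\R^n)} \le C_s\,\norm{f}_{H^{\sigma,s}(\R^n)}\,\norm{g}_{H^{\sigma,s}(\R^n)} \qquad (s > n/2,\ \sigma\ge 0),
\]
with $C_s$ \emph{independent of} $\sigma$, and apply it with $s=2$ (valid for $1\le n\le 3$). The key point is that by $|\xi| \le |\xi-\eta| + |\eta|$ together with \eqref{cosh:est},
\[
\cosh(\sigma|\xi|) \le \exp(\sigma|\xi-\eta|)\exp(\sigma|\eta|) \le 4\cosh(\sigma|\xi-\eta|)\cosh(\sigma|\eta|),
\]
so inserting this into $\widehat{fg}(\xi) = \int \hat f(\xi - \eta)\hat g(\eta)\d\eta$ reduces the weighted estimate to the classical unweighted $H^s$ algebra inequality applied to the functions $F, G$ with $\hat F = \cosh(\sigma|\cdot|)\,|\hat f|$ and $\hat G = \cosh(\sigma|\cdot|)\,|\hat g|$, which obey $\norm{F}_{H^s} = \norm{f}_{H^{\sigma,s}}$ and $\norm{G}_{H^s} = \norm{g}_{H^{\sigma,s}}$; the $\sigma$-dependence is entirely absorbed into the constant $4$. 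Iterating gives $\norm{u^p}_{H^{\sigma,2}} \le C\norm{u}_{H^{\sigma,2}}^p$, hence $\norm{u^p}_{H^{\sigma,0}} \le C\norm{u}_{H^{\sigma,2}}^p$ since $\angles{\xi}^0 \le \angles{\xi}^2$, and telescoping $u^p - v^p = \sum_{j=0}^{p-1} u^{p-1-j}(u-v)v^j$ yields the Lipschitz bound $\norm{u^p - v^p}_{H^{\sigma,0}} \le C\bigl(\norm{u}_{H^{\sigma,2}} + \norm{v}_{H^{\sigma,2}}\bigr)^{p-1}\norm{u-v}_{H^{\sigma,2}}$. I expect this $\sigma$-uniform algebra estimate to be the only genuinely non-routine ingredient; everything else is a standard Picard iteration.

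Finally I would close the contraction. Set $R := 2\bigl(\norm{u_0}_{H^{\sigma,2}} + \norm{u_1}_{H^{\sigma,0}}\bigr)$. Combining the linear and nonlinear bounds gives $\fixednorm{\Phi(u)}_{X_\delta} \le \tfrac R2 + C\delta\,\fixednorm{(u,u_t)}_{X_\delta}^p$ and $\fixednorm{\Phi(u) - \Phi(v)}_{X_\delta} \le C\delta\bigl(\fixednorm{(u,u_t)}_{X_\delta} + \fixednorm{(v,v_t)}_{X_\delta}\bigr)^{p-1}\fixednorm{(u,u_t)-(v,v_t)}_{X_\delta}$, so $\Phi$ maps the closed ball of radius $R$ in $X_\delta$ into itself and is a contraction there as soon as $C\delta R^{p-1} \le \tfrac12$; this holds for $\delta = c_0 R^{-(p-1)}$ with $c_0$ absolute, which is exactly \eqref{delta} up to a harmless constant. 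The Banach fixed point theorem then furnishes the unique solution in this ball; uniqueness in all of $X_\delta$ follows from the same difference estimate combined with Gronwall's inequality, and continuity of $(u, u_t)$ in time into the stated spaces follows from the strong continuity of $t\mapsto\cos(t\omega), \omega^{-1}\sin(t\omega)$ on $H^{\sigma,s}$ (dominated convergence on the Fourier side) together with continuity of the Duhamel integral.
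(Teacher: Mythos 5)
Your proposal is correct, and its skeleton — Duhamel's formula, the uniform-in-$t$ boundedness of the propagators on the $\cosh$-weighted spaces, and a Picard iteration closing on a time scale $\delta \sim R^{-(p-1)}$ — is the same as the paper's. The one genuine point of divergence is the nonlinear estimate. The paper proves only the weaker bound $\norm{u^p}_{H^{\sigma,0}} \lesssim \norm{u}^p_{H^{\sigma,2}}$: it passes to $U = \exp(\sigma|D|)u$, uses the triangle inequality $|\xi| \le \sum_j |\xi_j|$ on the Fourier side to discard the exponential weight entirely, and then bounds $\norm{V^p}_{L^2} = \norm{V}^p_{L^{2p}}$ by the Sobolev embedding $H^2 \hookrightarrow L^{2p}$, which works for all $p \ge 1$ in dimensions $1 \le n \le 4$ (and for $p \le n/(n-4)$ when $n \ge 5$). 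You instead prove the $\sigma$-uniform algebra property of $H^{\sigma,s}$ for $s > n/2$ via the submultiplicativity $\cosh(\sigma|\xi|) \le 4\cosh(\sigma|\xi-\eta|)\cosh(\sigma|\eta|)$, obtaining the stronger conclusion $\norm{u^p}_{H^{\sigma,2}} \lesssim \norm{u}^p_{H^{\sigma,2}}$ before relaxing to the $H^{\sigma,0}$ norm. Both rest on the same weight-absorption trick; your route is the more standard one and gives a product estimate that is reusable at higher regularity, while the paper's is slightly more economical and is what allows the remark that the theorem extends to $n=4$ (your algebra argument with $s=2$ is confined to $n \le 3$, which still covers the stated hypotheses). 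Your Lipschitz bound via the telescoping identity and the closing of the contraction reproduce the existence time \eqref{delta} exactly.
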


\vspace{2mm}

The second step in the proof of Theorem \ref{thm-gwp} is to prove an approximate conservation law for the norm of the solution, that involves a small parameter $\sigma >0$ and which reduces to the exact energy conservation law in the limit as $\sigma \rightarrow 0$. 
To derive this approximate conservation law, we  set
\[
v_\sigma(x,t): =\cosh (\sigma |D|)  u(x,t),\]
where $u$ is the solution to \eqref{NLB}. 
Define a modified energy associated with $v_\sigma$ by
\[
\mathcal E_{\sigma}(t)=\frac12 \int_{\R^n} \left( (\partial_t v_\sigma)^{2}+  (\Delta v_\sigma)^2  + m v_\sigma^{2}+  \frac2{p+1}  |v_\sigma|^{p+1} \right) \ dx.
\]

\begin{theorem}[Approximate conservation law]\label{thm-almostconsv}
Let $1\le n \le 3$,  $p\ge 1$ is an odd integer and  $\sigma > 0$. Given $(u_0, u_1) \in H^{\sigma, 2}\times H^{\sigma, 0} $, let $u$ be the local solution of \eqref{NLB} on $\R^n \times [0, \delta]$ that is obtained in Theorem \ref{thm-lwp}. Then
\begin{equation}\label{approx-Energy}
\sup_{0 \le t\le \delta}  \mathcal E_\sigma(t) =   \mathcal E_\sigma(0) +  \delta \sigma^2   \cdot  \mathcal O \left(    \left( \mathcal E_{\sigma}(0)\right)^{\frac{p+1}2}   \right).
\end{equation} 
\end{theorem}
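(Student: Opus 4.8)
The plan is to apply the Fourier multiplier $P:=\cosh(\sigma|D|)$ to the equation in \eqref{NLB}. Since $P$ commutes with $\partial_t$ and with $m+\Delta^2$, the function $v_\sigma=Pu$ solves $\partial_t^2 v_\sigma+(m+\Delta^2)v_\sigma=-P\bigl(|u|^{p-1}u\bigr)$; because $p$ is an odd integer, $|u|^{p-1}u=u^p$, so writing $F(w)=w^p$ this reads $\partial_t^2 v_\sigma+(m+\Delta^2)v_\sigma=-PF(u)$, while also $|v_\sigma|^{p-1}v_\sigma=F(v_\sigma)$. Differentiating $\mathcal E_\sigma$ in $t$, integrating by parts twice in the $(\Delta v_\sigma)^2$ term so that $\int\Delta v_\sigma\,\partial_t\Delta v_\sigma=\int\Delta^2 v_\sigma\,\partial_t v_\sigma$, and substituting the equation, the linear contributions collapse and one is left with
\[
\frac{d}{dt}\mathcal E_\sigma(t)=\int_{\R^n}\partial_t v_\sigma\bigl(F(v_\sigma)-PF(u)\bigr)\d x=\int_{\R^n}\partial_t v_\sigma\bigl(F(Pu)-PF(u)\bigr)\d x,
\]
using $v_\sigma=Pu$. (This identity is first derived for Schwartz data, where every term is classical, and then extended to data in $H^{\sigma,2}\times H^{\sigma,0}$ by the continuous dependence supplied by Theorem~\ref{thm-lwp}.) Thus the whole theorem reduces to showing that the defect $F(Pu)-PF(u)$ is of size $\sigma^2$.

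The key nonlinear estimate I would isolate is
\[
\norm{F(Pu)-PF(u)}_{L^2}\lesssim\sigma^2\,\norm{u}_{H^{\sigma,2}}^{p}\qquad(1\le n\le 3).
\]
To prove it, split $F(Pu)-PF(u)=\bigl(F(Pu)-F(u)\bigr)+(I-P)F(u)$. For the second piece the multiplier of $I-P$ has absolute value $\cosh(\sigma|\xi|)-1$, which by \eqref{cosh-1:est} with $\alpha=1$ is $\le\sigma^2|\xi|^2\cosh(\sigma|\xi|)\le\sigma^2\langle\xi\rangle^2\cosh(\sigma|\xi|)$; hence $\norm{(I-P)F(u)}_{L^2}\le\sigma^2\norm{u^p}_{H^{\sigma,2}}\lesssim\sigma^2\norm{u}_{H^{\sigma,2}}^p$, the last step being the algebra property of $H^{\sigma,2}$ for $n\le 3$ (a consequence of $\cosh(\sigma|\xi|)\le 2\cosh(\sigma|\xi-\eta|)\cosh(\sigma|\eta|)$ together with the standard $H^2$-algebra argument, which uses $H^2\hookrightarrow L^\infty$). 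For the first piece, factor $a^p-b^p=(a-b)\sum_{k=0}^{p-1}a^k b^{p-1-k}$ with $a=Pu$, $b=u$: the difference $a-b=(P-I)u$ obeys $\norm{(P-I)u}_{L^2}\le\sigma^2\norm{u}_{H^{\sigma,2}}$ by the same multiplier bound, while each factor $(Pu)^k u^{p-1-k}$ is bounded in $L^\infty$ by $\bigl(\norm{Pu}_{L^\infty}+\norm{u}_{L^\infty}\bigr)^{p-1}\lesssim\norm{u}_{H^{\sigma,2}}^{p-1}$, using $\norm{Pu}_{L^\infty}=\norm{v_\sigma}_{L^\infty}\lesssim\norm{v_\sigma}_{H^2}=\norm{u}_{H^{\sigma,2}}$ and $\norm{u}_{L^\infty}\lesssim\norm{u}_{H^2}\le\norm{u}_{H^{\sigma,2}}$ (Sobolev embedding, $n\le 3$, and $\cosh\ge 1$). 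Combining the two pieces gives the displayed estimate.

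With this in hand, Cauchy--Schwarz yields $\bigl|\tfrac{d}{dt}\mathcal E_\sigma(t)\bigr|\le\norm{\partial_t v_\sigma(t)}_{L^2}\norm{F(Pu)-PF(u)}_{L^2}\lesssim\sigma^2\norm{u_t(t)}_{H^{\sigma,0}}\norm{u(t)}_{H^{\sigma,2}}^p$. By the construction in Theorem~\ref{thm-lwp}, which places the solution in a ball of radius comparable to $A:=\norm{u_0}_{H^{\sigma,2}}+\norm{u_1}_{H^{\sigma,0}}$ in $C([0,\delta];H^{\sigma,2})\times C([0,\delta];H^{\sigma,0})$, one has $\norm{u(t)}_{H^{\sigma,2}}+\norm{u_t(t)}_{H^{\sigma,0}}\lesssim A$ on $[0,\delta]$, so $\bigl|\tfrac{d}{dt}\mathcal E_\sigma(t)\bigr|\lesssim\sigma^2 A^{p+1}$; integrating over $[0,t]\subseteq[0,\delta]$ gives $\sup_{0\le t\le\delta}|\mathcal E_\sigma(t)-\mathcal E_\sigma(0)|\lesssim\delta\sigma^2 A^{p+1}$. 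Finally, discarding the nonnegative $|v_\sigma|^{p+1}$ term and using $m>0$, $\mathcal E_\sigma(0)\gtrsim\norm{\partial_t v_\sigma(0)}_{L^2}^2+\norm{v_\sigma(0)}_{H^2}^2=\norm{u_1}_{H^{\sigma,0}}^2+\norm{u_0}_{H^{\sigma,2}}^2\gtrsim A^2$, so $A^{p+1}\lesssim(\mathcal E_\sigma(0))^{(p+1)/2}$, which is exactly \eqref{approx-Energy}. I expect the main obstacle to be the defect estimate $\norm{F(Pu)-PF(u)}_{L^2}\lesssim\sigma^2\norm{u}_{H^{\sigma,2}}^p$: this is where the improvement from the weight $\sigma$ to $\sigma^2$ in \eqref{approx-Energy} (and hence from $t^{-1}$ to $t^{-1/2}$ in the final lower bound) is won, and it works only because the quadratic bound \eqref{cosh-1:est} costs two derivatives which the nonlinearity can absorb precisely when $n\le 3$ (so that $H^2$ is an algebra embedding into $L^\infty$) and $p$ is an odd integer (so that $F(u)=u^p$ is a genuine polynomial).
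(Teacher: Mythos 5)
Your proposal is correct, and its skeleton coincides with the paper's: differentiate $\mathcal E_\sigma$, reduce to the defect $N_p(v_\sigma)=(Pu)^p-P(u^p)$ with $P=\cosh(\sigma|D|)$, prove $\norm{N_p(v_\sigma)}_{L^2}\lesssim\sigma^2\norm{v_\sigma}_{H^2}^p$, and close with Cauchy--Schwarz, the a priori bound from Theorem~\ref{thm-lwp}, and $\mathcal E_\sigma(0)\gtrsim A^2$. Where you genuinely diverge is in the proof of the defect estimate, which is the heart of the paper (Lemmas~\ref{lm-coshest0}, \ref{lm-coshest} and the proof of Lemma~\ref{lm-error-est}). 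The paper works entirely on the Fourier side: it expands $\widehat{N_p(v_\sigma)}$ as a $p$-fold convolution against the symbol $1-\cosh(\sigma|\xi|)\prod_j\sech(\sigma|\xi_j|)$, and uses the product-to-sum identity \eqref{cosh-prod} to bound that symbol by $c(p)\,\sigma^2|\xi_1||\xi_2|$ (with $|\xi_1|\ge|\xi_2|\ge\cdots$), i.e.\ it distributes the two gained derivatives over the two highest-frequency factors, yielding $\norm{(|D|w_\sigma)^2w_\sigma^{p-2}}_{L^2}\lesssim\norm{w_\sigma}_{H^2}^p$ via $\norm{|D|w_\sigma}_{L^4}^2\norm{w_\sigma}_{L^\infty}^{p-2}$. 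You instead split $F(Pu)-PF(u)=\bigl(F(Pu)-F(u)\bigr)+(I-P)F(u)$ and use the one-variable bound $\cosh r-1\le r^2\cosh r$ on a single factor, paying two derivatives on one function; this is more elementary and avoids the combinatorial $\cosh$ identity altogether, but it is a cruder symbol bound (effectively $\sigma^2|\xi_1|^2$ rather than $\sigma^2|\xi_1||\xi_2|$). In the present setting both close, because the solution lives in $H^{\sigma,2}$ and $H^2(\R^n)$ is an algebra embedding into $L^\infty$ for $n\le3$, so the algebra property $\norm{u^p}_{H^{\sigma,2}}\lesssim\norm{u}_{H^{\sigma,2}}^p$ that you invoke is available; the paper's sharper distribution of derivatives would become essential at lower regularity or in higher dimensions, where one cannot afford to load both derivatives onto one factor. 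Your bookkeeping at the end (a priori bounds, $A^{p+1}\lesssim(\mathcal E_\sigma(0))^{(p+1)/2}$) matches \eqref{approx1}--\eqref{v-databd1} exactly.
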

\noindent Observe that in the limit as $\sigma \rightarrow 0$, we recover the conservation $ \mathcal E_0(t)=  \mathcal E_0(0) $ for $0\le t \le \delta$ (note that $v_0=u$).
Applying the last two theorems repeatedly, and then by taking $\sigma>0$ small enough we can cover any time interval $[0, T]$ and obtain the main result, Theorem \ref{thm-gwp}.

\vspace{2mm}

\medskip
\noindent \textbf{Notation}. 
For any positive numbers $a$ and $b$, the notation $a\lesssim b$ stands for $a\le cb$, where $c$ is a positive constant that may change from line to line. Moreover, we denote $a \sim b$  when  $a \lesssim b$ and $b \lesssim a$.

\vspace{2mm}

In the next sections we prove Theorems \ref{thm-lwp}, \ref{thm-almostconsv} and \ref{thm-gwp}.

\section{Proof of Theorem \ref{thm-lwp} }

Theorem \ref{thm-lwp} can easily be proved using energy inequality, Sobolev embedding and a standard contraction argument. Indeed, consider 
the Cauchy problem for the linear beam equation
\begin{equation*}
\begin{split}
    & u_{tt} + \left(m+\Delta^2\right) u = F,
    \\
    & (u, u_t )|_{t=0}= (u_0 , u_1)
    \end{split}
\end{equation*}
whose solution is given by Duhamel's formula
\begin{equation}
\label{duml}
u(t) = \mathcal S'_m(t) u_0 + \mathcal S_m(t) u_1 + \int_{0}^{t}  \mathcal S_m(t-s) F( s)\ ds,
\end{equation}
where
$$
\mathcal S_m(t)= \frac{ \sin\left( t \angles{\Delta}_m\right)}{ \angles{\Delta}_m}.
$$
Applying $\cosh(\sigma |D|)$ to \eqref{duml} and taking the $H^2$-norm on both sides
yields
the energy inequality 
\begin{equation}
\label{energyinq}
\sup_{0 \le t \le \delta}  \left( \norm{u}_{H^{\sigma, 2}} + \norm{u_t}_{H^{\sigma, 0}}   \right) \lesssim  \norm{u_0}_{H^{\sigma, 2}} + \norm{u_1}_{H^{\sigma, 0}} +  \int_{0}^{\delta}  \norm{F(s)}_{ H^{\sigma, 0}} \ ds
\end{equation}
for some $\delta >0$.

Now consider the integral formulation of \eqref{NLB},
\begin{equation}
\label{lntF}
u(t) = \mathcal S'_m(t) u_0 + \mathcal S_m(t) u_1 + \int_{0}^{t}  \mathcal S_m(t-s) u^p ( s)\ ds,
\end{equation}
where we used the fact that $ |u|^{p-1}u=u^p$ for odd $p$.

Then by \eqref{energyinq} and a standard contraction argument, Theorem \ref{thm-lwp} reduces to proving the nonlinear estimate
\begin{equation}
\label{NonlinearEst}
\norm{u^p}_{ H^{\sigma, 0}}  \lesssim \norm{u}^p_{ H^{\sigma, 2}} ,
\end{equation}
which is also equivalent, by \eqref{GH-eqiv}, to proving  
\begin{equation*}
\norm{u^p}_{ G^{\sigma, 0}}  \lesssim \norm{u}^p_{ G^{\sigma, 2}} .
\end{equation*}
Setting $U=\exp(\sigma |D|)  u$, this estimate reduces further to 
\begin{equation}
\label{NonlinearEst1}
\norm{ \exp(\sigma |D|)  \left[    \left( \exp(-\sigma |D|) U \right)^p   \right] }_{ L^2_x}  \lesssim \norm{U}^p_{ H^{2}} .
\end{equation}
By Plancherel,
\begin{equation*}
 \begin{split}
\text{LHS } \ \eqref{NonlinearEst1}&=
\left\|\mathcal F_x \left\{ \exp(\sigma |D|) \left[    \left( \exp(-\sigma |D|) U \right)^p   \right] \right\}(\xi) \right\|_{L^2_\xi}
\\
& =\left\| \int_{\xi= \sum_{j=1}^p \xi_j} 
   \exp \left( \sigma  \left[  |\xi|-\sum_{j=1}^p |\xi_j| \right]    \right)  \prod_{j=1}^p
 \hat{U}(\xi_{j})  \ d\xi_1 d\xi_2 \cdots d\xi_p\right\|_{L^2_\xi}
 \\
& \le \left\| \int_{\xi= \sum_{j=1}^p \xi_j} 
\prod_{j=1}^p
 |\hat{U}(\xi_{j})|  \ d\xi_1 d\xi_2 \cdots d\xi_p \right\|_{L^2_\xi}
  \\
& =\norm{V^p}_{ L^2_x} 
\end{split}
\end{equation*}
where $V= \mathcal F^{-1} \left[ |\widehat{U}| \right]$. To obtain the third line we used the fact that $ |\xi|\le\sum_{j=1}^p |\xi_j| $, which follows from the triangle inequality.

Now by Sobolev embedding,
\begin{align*}
\norm{V^p}_{ L^2_x} = \norm{V}^p_{ L^{2p}_x} \lesssim  \norm{V}^p_{ H^2} =\norm{U}^p_{ H^2} 
\end{align*}
for all \footnote{This estimate also holds for $1\le p \le \frac{n}{n-4}$ if $n\ge 5$. } $p\ge 1$ if $1\le n \le 4$. This concludes the proof of \eqref{NonlinearEst1}, and hence \eqref{NonlinearEst}.

\section{Proof of Theorem \ref{thm-almostconsv} and Theorem \ref{thm-gwp} } 

Let $1\le n\le 3$, $p\ge 1$ is an odd integer, $\sigma >0$, and 
 $\delta>0$ be the local existence time for the solution obtained in Theorem \ref{thm-lwp}, \eqref{delta}. Recall that $v_\sigma(x,t) =\cosh (\sigma |D|)  u(x,t)$, where $u$ is the solution to \eqref{NLB}. Thus, $u(x,t) =\sech (\sigma |D|)  v_\sigma(x,t)$.

\subsection{Proof of Theorem \ref{thm-almostconsv}}

Using integration by parts and equation \eqref{NLB}, we obtain
\begin{align*}
\mathcal E'_{\sigma}(t) &= \int_{\R^n}  \partial_tv_\sigma \left[ \partial_{tt} v_\sigma +\Delta^2 v_\sigma  +mv   +   v_\sigma^{p} \right]   \ dx
\\
&= \int_{\R^n}   \partial_tv_\sigma  \left[ \cosh (\sigma |D|)  \left(  u_{tt}+  \Delta^2 u   +mu \right) +    v_\sigma^{p} \right ]  \ dx
\\
&= \int_{\R^n}   \partial_tv_\sigma  \left[ -\cosh (\sigma |D|)    u^p  +    v_\sigma^{p} \right ]  \ dx
\\
&= \int_{\R^n}  \partial_t v_\sigma \cdot  N_p\left(  v_\sigma \right)  \ dx,
\end{align*}
where
\begin{equation*}
N_p( v_\sigma)=   v_\sigma^p - \cosh (\sigma |D|) \left[  \sech (\sigma |D| )  v_\sigma  \right]^p.
\end{equation*}
Therefore,
 \begin{equation}\label{Energy-mod1} 
 \begin{split}
\mathcal E_{\sigma}(t)
&= \mathcal E_{\sigma} (0) +\int_{0}^{t} \int_{\R^n} \partial_t v_\sigma  (x,s)  \cdot N_p\left(  v_\sigma (x, s) \right) \, dx ds.
\end{split}
 \end{equation}

\vspace{2mm}
We now state a key estimate that will be proved in the next section.
\begin{lemma}\label{lm-error-est}
For $\partial_t v_\sigma \in L_x^2 $ and $ v_\sigma \in H^2 $,
we have the estimate
\begin{equation}\label{ErrorEst}
 \Bigabs{\int_{\R^n} \partial_t v_\sigma  \cdot  N_P\left( v_\sigma \right) \, dx }     \le C \sigma^2  \norm{
\partial_t v_\sigma  }_{ L_x^2}  \norm{
 v_\sigma  }^p_{ H^2}
\end{equation}
for some constant $C>0$.
\end{lemma}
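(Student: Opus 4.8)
The plan is to bound the integral by Cauchy--Schwarz,
\[
\Bigabs{\int_{\R^n}\partial_t v_\sigma\cdot N_p(v_\sigma)\,dx}\le\norm{\partial_t v_\sigma}_{L^2_x}\,\norm{N_p(v_\sigma)}_{L^2_x},
\]
so that the whole lemma reduces to the claim $\norm{N_p(v_\sigma)}_{L^2_x}\lesssim\sigma^2\norm{v_\sigma}_{H^2}^p$. To prove this I would set $u=\sech(\sigma|D|)v_\sigma$, so that $v_\sigma=\cosh(\sigma|D|)u$ and $N_p(v_\sigma)=\bigl(\cosh(\sigma|D|)u\bigr)^p-\cosh(\sigma|D|)(u^p)$, and pass to the Fourier side. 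Both terms are $p$-fold convolutions of $\hat u$ over $\{\xi=\xi_1+\dots+\xi_p\}$, and subtracting them produces the multiplier
\[
m_\sigma(\xi_1,\dots,\xi_p):=\prod_{j=1}^p\cosh(\sigma|\xi_j|)-\cosh\Bigl(\sigma\bigl|\textstyle\sum_{j=1}^p\xi_j\bigr|\Bigr),
\]
so that $\bigabs{\widehat{N_p(v_\sigma)}(\xi)}\le\int_{\xi=\sum_j\xi_j}\abs{m_\sigma}\prod_{j=1}^p\abs{\hat u(\xi_j)}\,d\xi_1\cdots d\xi_{p-1}$.

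The crux is the symbol estimate
\[
\abs{m_\sigma(\xi_1,\dots,\xi_p)}\le C_p\,\sigma^2\Bigl(\sum_{i=1}^p|\xi_i|^2\Bigr)\prod_{j=1}^p\cosh(\sigma|\xi_j|).
\]
I would obtain it by writing, with $a_j=\sigma|\xi_j|$ and $b=\sigma\abs{\sum_j\xi_j}$,
\[
\abs{m_\sigma}\le\Bigl(\prod_{j=1}^p\cosh a_j-1\Bigr)+\bigl(\cosh b-1\bigr),
\]
both terms being nonnegative. For the first, telescoping gives $\prod_{j=1}^p\cosh a_j-1=\sum_{k=1}^p\bigl(\prod_{j<k}\cosh a_j\bigr)(\cosh a_k-1)$, and then $\cosh a_k-1\le a_k^2\cosh a_k$ — which is \eqref{cosh-1:est} with $\alpha=1$ — together with $\cosh a_j\ge1$ bounds it by $\bigl(\sum_k a_k^2\bigr)\prod_j\cosh a_j$. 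For the second, $\cosh b-1\le b^2\cosh b$, and since $\abs{\sum_j\xi_j}\le\sum_j|\xi_j|$ and $\cosh$ is increasing on $[0,\infty)$, $b^2\cosh b\le\bigl(\sum_j a_j\bigr)^2\cosh\bigl(\sum_j a_j\bigr)\le 2^{p-1}\bigl(\sum_j a_j\bigr)^2\prod_j\cosh a_j$, the last step iterating $\cosh(x+y)\le2\cosh x\cosh y$. Finally $\sum_k a_k^2\le\bigl(\sum_k a_k\bigr)^2=\sigma^2\bigl(\sum_j|\xi_j|\bigr)^2\le p\,\sigma^2\sum_i|\xi_i|^2$, which gives the claim with $C_p=p\,(1+2^{p-1})$.

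With the symbol estimate in hand, I would absorb $\cosh(\sigma|\xi_j|)\abs{\hat u(\xi_j)}=\abs{\widehat{v_\sigma}(\xi_j)}$, use the symmetry of the convolution integrand to transfer the full weight $\sum_i|\xi_i|^2$ onto a single factor (at the cost of a factor $p$), and read off
\[
\bigabs{\widehat{N_p(v_\sigma)}(\xi)}\le C\,\sigma^2\,\widehat{\bigl(W\,V^{p-1}\bigr)}(\xi),\qquad W:=\mathcal F^{-1}\bigl[|\xi|^2\abs{\widehat{v_\sigma}}\bigr],\quad V:=\mathcal F^{-1}\bigl[\abs{\widehat{v_\sigma}}\bigr].
\]
Plancherel, Hölder's inequality $\norm{W V^{p-1}}_{L^2_x}\le\norm{W}_{L^2_x}\norm{V}_{L^\infty_x}^{p-1}$, and the Sobolev embedding $H^2(\R^n)\hookrightarrow L^\infty(\R^n)$ (valid for $1\le n\le 3$) then give $\norm{W}_{L^2_x}\le\norm{v_\sigma}_{H^2}$ and $\norm{V}_{L^\infty_x}\lesssim\norm{V}_{H^2}=\norm{v_\sigma}_{H^2}$, hence $\norm{N_p(v_\sigma)}_{L^2_x}\lesssim\sigma^2\norm{v_\sigma}_{H^2}^p$, and the lemma follows. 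The one genuinely delicate point is the symbol estimate: one must extract the \emph{full} quadratic factor $\sigma^2$ while keeping every factor $\cosh(\sigma|\xi_j|)$ intact, so that each can be recombined with the corresponding $\widehat{v_\sigma}$; this is precisely where the hyperbolic weight $\cosh(\sigma|\xi|)$ outperforms the exponential weight $e^{\sigma|\xi|}$, whose analogue of \eqref{cosh-1:est} only affords a gain of $\sigma^\alpha$ with $\alpha\le1$.
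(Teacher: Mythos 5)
Your proposal is correct, and it reaches the same reduction (Cauchy--Schwarz followed by $\norm{N_p(v_\sigma)}_{L^2_x}\lesssim\sigma^2\norm{v_\sigma}_{H^2}^p$ on the Fourier side) but proves the key symbol bound by a genuinely different route. The paper's Lemma \ref{lm-coshest} expands $\prod_j\cosh(\sigma|\xi_j|)$ via the product-to-sum identity \eqref{cosh-prod} and then applies the second-difference estimate $|\cosh b-\cosh a|\le\frac12|b^2-a^2|(\cosh a+\cosh b)$ of Lemma \ref{lm-coshest0}; crucially, this yields the \emph{off-diagonal} bound $2^p\sum_{j\neq k}|\sigma\xi_j||\sigma\xi_k|\lesssim\sigma^2|\xi_1||\xi_2|$ (after ordering the frequencies), so the two derivatives land on two \emph{different} factors and the final step is the split $\norm{(|D|w_\sigma)^2 w_\sigma^{p-2}}_{L^2}\le\norm{|D|w_\sigma}_{L^4}^2\norm{w_\sigma}_{L^\infty}^{p-2}$. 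Your telescoping-plus-$\cosh(x+y)\le2\cosh x\cosh y$ argument is more elementary (no induction on the product identity, no need for Lemma \ref{lm-coshest0}) but produces the diagonal weight $\sum_i|\xi_i|^2$, forcing both derivatives onto a single factor and the split $\norm{W}_{L^2}\norm{V}_{L^\infty}^{p-1}$. For the setting at hand ($H^2$ data, $1\le n\le3$, so $H^2\hookrightarrow L^\infty$) the two are equally effective and your constants are immaterial; the paper's sharper off-diagonal structure would only begin to pay off in regimes where one cannot afford two derivatives on one factor (lower regularity than $H^2$, or higher dimensions where one would replace Sobolev embedding by Strichartz estimates, as the authors remark). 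Every step of your symbol estimate checks out, including the monotonicity of $t\mapsto t^2\cosh t$ needed to pass from $b$ to $\sum_j a_j$, so I see no gap.
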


Now we use \eqref{Energy-mod1} and \eqref{ErrorEst} to obtain the a priori energy estimate
\begin{equation}\label{approx1}
\sup_{0 \le t\le \delta} \mathcal E_{\sigma} (t) 
=  \mathcal E_{\sigma}(0) + \delta  \sigma^2   \cdot \mathcal O \left( \norm{
\partial_t v_\sigma  }_{ L_\delta^\infty L_x^2} \norm{
 v_\sigma  }^p_{ L_\delta^\infty H^2} \right),
\end{equation} 
where we use the notation
$$
 L_\delta^\infty X: =  L_t^\infty X\left([0, \delta]\times \R^n \right)
$$
with $X= L_x^2$ or $H^2$.

As a consequence of  Theorem \ref{thm-lwp} we get 
\begin{equation}\label{v-databd}
\begin{split}
 \norm{  v_\sigma  }_{ L_\delta^\infty H^2}
 + \norm{\partial_t v_\sigma }_{ L_\delta^\infty L_x^2}&=  \norm{u }_{L_\delta^\infty H^{\sigma, 2}} +  \norm{u_t}_{L_\delta^\infty H^{\sigma, 0}} 
 \\
&\le C \left( \norm{u_0 }_{  H^{\sigma, 2}} +  \norm{u_1 }_{  H^{\sigma, 0}} \right)
\\
 &=   C \left( \norm{v_\sigma(\cdot, 0)}_{  H^{ 2}} +  \norm{\partial_t v_\sigma(\cdot, 0)  }_{  L_x^{ 2}} \right).
 \end{split}
\end{equation}
Since 
\begin{equation*}
\begin{split}
\mathcal E_{\sigma}(0) &= \frac12 \int_{\R^n} \left( [\partial_t v_\sigma (x, 0)]^{2}  +  (\Delta  v_\sigma(x, 0) )^2  + m [v_\sigma(x, 0) ]^{2}+  \frac2{p+1}  |v_\sigma(x, 0) |^{p+1} \right) \ dx
\\
& \gtrsim \left( \norm{ v_\sigma(\cdot, 0)  }_{  H^{ 2}} +  \norm{ \partial_t v_\sigma(\cdot, 0)  }_{  L_x^{ 2}} \right)^2
\end{split}
\end{equation*}
it follows from \eqref{v-databd} that
\begin{equation}\label{v-databd1}
\begin{split}
 \norm{  v_\sigma  }_{ L_\delta^\infty H^2}
 + \norm{\partial_t v_\sigma }_{ L_\delta^\infty L_x^2} \lesssim  [ \mathcal E _{\sigma}(0)]^\frac12.
 \end{split}
\end{equation}
Finally, using \eqref{v-databd1} in \eqref{approx1} we obtain the desired estimate \eqref{approx-Energy}.

\subsection{Proof of Theorem \ref{thm-gwp}}

Suppose that $(u_0, u_1) \in H^{\sigma_0, 2}\times H^{\sigma_0, 0} $ for some $\sigma_0 > 0$. This implies 
$$
v_{\sigma_0} (\cdot, 0)=\cosh (\sigma_0 |D)|) u_0 \in H^2, \qquad \partial_t v_{\sigma_0} (\cdot, 0)= \cosh (\sigma_0 |D)|) u_1 \in L^2.
$$
Then by Sobolev embedding
\begin{align*}
\mathcal E_{\sigma_0}(0) \lesssim  \norm{v_{\sigma_0} (\cdot, 0)}^2_{H^2}  +  \norm{\partial_t v_{\sigma_0} (\cdot, 0)}^2_{L_x^2} +  \norm{v_{\sigma_0} (\cdot, 0)}^{p+1}_{H^2}   <\infty.
\end{align*}
Now following the argument in \cite{ST2015} (see also \cite{SD2015})
we can construct a solution on $[0, T]$ for arbitrarily large time $T$ by applying the approximate conservation \eqref{approx-Energy}, so as to repeat the local result in Theorem \ref{thm-almostconsv} on successive short time intervals of size $\delta $ to reach $T$ by adjusting the strip width parameter $ \sigma \in (0, \sigma_0]$ of the solution according to the size of $T$. 

The goal is to prove that for a given parameter $\sigma \in (0, \sigma_0]$  and large $T>0$,
\begin{equation}\label{keybound}
\sup_{t\in [0, T]}  \mathcal E_\sigma(t) \le 2 \mathcal E_{\sigma_0}(0) \qquad \text{for} \quad \sigma = c/\sqrt T ,
\end{equation}
where $c > 0$ depends only on the initial data norm, $\sigma_0$ and $p$. This would imply $\mathcal E_\sigma(t) < \infty$ for all $t \in [0,T]$, and hence
\[
 (u, u_t) (\cdot, t) \in H^{\sigma, 2} \times H^{\sigma, 0} \quad \text{for } \quad \sigma = c/\sqrt T  \quad \text{and}  \ t\in [0, T].
\]

\vspace{2mm}

To prove \eqref{keybound} first observe that for a given parameter $\sigma \in (0, \sigma_0]$
and $0< t_0\le \delta$ we have by
Theorems \ref{thm-lwp} and 
\ref{thm-almostconsv},
\begin{align*}
\sup_{t\in [0, t_0]}  \mathcal E_\sigma(t) \le  \mathcal E_{\sigma}(0) + C\delta \sigma^2  \left(  \mathcal E_{\sigma}(0)\right)^{\frac{p+1}2} \le  \mathcal E_{\sigma_0}(0) + C\delta \sigma^2  \left(  \mathcal E_{\sigma_0}(0)\right)^{\frac{p+1}2} ,
\end{align*}
where we also used the fact the $ \mathcal E_{\sigma}(0) \le \mathcal E_{\sigma_0}(0)$ for $\sigma \le \sigma_0$; this holds since $\cosh r$ is increasing for $r\ge 0$.
Thus,
\begin{equation}\label{Et0}
\sup_{t\in [0, t_0]}  \mathcal E_\sigma(t) \le 2 \mathcal E_{\sigma_0}(0).
\end{equation}
provided
\begin{equation}\label{sigmabd-1}
C\delta \sigma^2  \left(  \mathcal E_{\sigma_0}(0)\right)^{\frac{p+1}2} \le  \mathcal E_{\sigma_0}(0).
\end{equation}

Then we can apply Theorem \ref{thm-lwp}, with initial time $t=t_0$ and the time step $\delta$ as in  \eqref{delta}
to extend the solution to $[t_0, t_0+ \delta]$.
By Theorem \ref{thm-almostconsv}, the approximate conservation law,  and \eqref{Et0} we have
\begin{equation}
  \label{ind1}
  \sup_{[t_0,  t_0+\delta]} \mathcal E_\sigma(t) \le \mathcal E_\sigma(t_0) + C\delta  \sigma^2   \left( 2 \mathcal E_{\sigma_0}(0)\right)^{\frac{p+1}2} .
  \end{equation}
In this way, we cover time intervals $[0, \delta]$, $[\delta, 2\delta]$ etc., and obtain
\begin{align*}
\mathcal E_\sigma(\delta) & \le \mathcal E_\sigma(0) + C \delta  \sigma^2   \left( 2 \mathcal E_{\sigma_0}(0)\right)^{\frac{p+1}2} 
\\
\mathcal E_\sigma(2\delta) & \le \mathcal E_\sigma(\delta) + C \delta  \sigma^2   \left( 2 \mathcal E_{\sigma_0}(0)\right)^{\frac{p+1}2} \le  \mathcal E_\sigma(0) + C 2\delta \sigma^2   \left( 2 \mathcal E_{\sigma_0}(0)\right)^{\frac{p+1}2} 
\\
& \cdots
\\
\mathcal E_\sigma(n\delta) & \le \mathcal E_\sigma(0) + C n \delta  \sigma^2   \left( 2 \mathcal E_{\sigma_0}(0)\right)^{\frac{p+1}2} .
\end{align*}
This continue as long as 
\begin{equation}\label{sigmabd-n}
 C n\delta  \sigma^2  \left( 2 \mathcal E_{\sigma_0}(0)\right)^{\frac{p+1}2}  \le  \mathcal E_{\sigma_0}(0) 
\end{equation}
since then 
\begin{equation*}
 \mathcal E_\sigma(n\delta) \le \mathcal E_{\sigma}(0) +  Cn\delta  \sigma^2   \left( 2 \mathcal E_{\sigma_0}(0)\right)^{\frac{p+1}2} \le 2 \mathcal E_{\sigma_0}(0)
  \end{equation*}
so we can take care one more step. Note also that \eqref{sigmabd-1} follows from \eqref{sigmabd-n}.

Thus, the induction stops at the first integer $n$ for which
$$
 Cn\delta  \sigma^2  \left( 2 \mathcal E_{\sigma_0}(0)\right)^{\frac{p+1}2}  >  \mathcal E_{\sigma_0}(0) 
$$
and then we have reached the final time 
$$
T=n\delta,
$$
when 
$$
 C T \sigma^2  \left( 2 \mathcal E_{\sigma_0}(0)\right)^{\frac{p-1}2}  >  1.
$$
Note that $T$ will be arbitrarily large for $\sigma >0$ small enough. Moreover, 
$$
\sigma^2 > C^{-1}\left( 2 \mathcal E_{\sigma_0}(0)\right)^{\frac{-p+1}2} \cdot  T^{-1} 
$$
 proving 
 $$
 \sigma \ge c T^{-\frac12},
 $$
as claimed.

\section{Proof of Lemma \ref{lm-error-est}}

First we prove the followings two Lemmas which are crucial in the proof of Lemma  \ref{lm-error-est}.
\begin{lemma} \label{lm-coshest0}
For $a, b \in \R$, we have
\begin{equation}\label{coshpest0}
\left| \cosh b - \cosh a   \right| \le \frac12 \Bigabs{ b^2-a^2} \left(\cosh b+  \cosh a\right).
\end{equation}
\end{lemma}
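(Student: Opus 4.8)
The plan is to prove the inequality $|\cosh b - \cosh a| \le \tfrac12|b^2-a^2|(\cosh b + \cosh a)$ by reducing it, via the even symmetry of $\cosh$, to the case $0 \le a \le b$, and then exploiting the fundamental theorem of calculus together with a change of variables to the quadratic coordinate $r = t^2$. First I would observe that both sides of \eqref{coshpest0} are unchanged if we replace $a$ by $|a|$ and $b$ by $|b|$, since $\cosh$ is even and $b^2 = |b|^2$; hence without loss of generality we may assume $0 \le a \le b$. In that regime $\cosh b \ge \cosh a$, so the left side equals $\cosh b - \cosh a$.

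The key step is then to write, using $\tfrac{d}{dt}\cosh t = \sinh t$ and the substitution $r = t^2$, $dr = 2t\,dt$,
\[
\cosh b - \cosh a = \int_a^b \sinh t \d t = \int_{a^2}^{b^2} \frac{\sinh\sqrt r}{2\sqrt r} \d r.
\]
The point is that the integrand $g(r) := \dfrac{\sinh\sqrt r}{2\sqrt r}$ is nonnegative and \emph{increasing} on $(0,\infty)$: indeed $g(r) = \tfrac12\sum_{k\ge 0} \dfrac{r^k}{(2k+1)!}$ has a power series with nonnegative coefficients. Therefore, on the interval $[a^2, b^2]$ we may bound $g(r) \le g(b^2) = \dfrac{\sinh b}{2b}$, giving
\[
\cosh b - \cosh a \le (b^2 - a^2)\cdot \frac{\sinh b}{2b}.
\]
It then remains to check the elementary scalar inequality $\dfrac{\sinh b}{b} \le \cosh b \le \cosh b + \cosh a$ for $b \ge 0$ (the first bound being $\sinh b \le b\cosh b$, which follows by comparing power series or noting $\tfrac{d}{db}(b\cosh b - \sinh b) = b\sinh b \ge 0$), whence $|b^2-a^2|\tfrac{\sinh b}{2b} \le \tfrac12|b^2-a^2|(\cosh b + \cosh a)$, completing the argument.

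I do not expect a serious obstacle here; the only mildly delicate point is justifying the monotonicity of $g(r) = \sinh\sqrt r/(2\sqrt r)$ cleanly, which the power-series representation settles at once. An alternative route avoiding the substitution is to use the product-to-sum identity $\cosh b - \cosh a = 2\sinh\!\big(\tfrac{b+a}{2}\big)\sinh\!\big(\tfrac{b-a}{2}\big)$ and then bound $\sinh\!\big(\tfrac{b-a}{2}\big) \le \tfrac{b-a}{2}\cosh\!\big(\tfrac{b-a}{2}\big)$ and $2\sinh\!\big(\tfrac{b+a}{2}\big)\cosh\!\big(\tfrac{b-a}{2}\big) = \sinh b + \sinh a \le b\cosh b + a\cosh a \le (b+a)(\cosh a + \cosh b)$, again reducing to elementary one-variable estimates; either approach works, and I would present whichever is shorter in the final writeup.
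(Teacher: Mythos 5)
Your proof is correct and takes essentially the same approach as the paper: reduce to $0\le a\le b$ by evenness, express $\cosh b-\cosh a$ as an integral, and bound the integrand by its value at the right endpoint to arrive at the intermediate bound $\tfrac12\left(b^2-a^2\right)\cosh b$. The paper gets there slightly more directly by writing $\cosh b-\cosh a=\int_a^b\int_0^s\cosh r\,dr\,ds$ and using $\cosh r\le\cosh b$, which avoids your substitution $r=t^2$ and the monotonicity discussion for $\sinh\sqrt{r}/(2\sqrt{r})$.
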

\begin{proof}
Note that $\cosh r$ is an increasing function for $r\ge 0$.
 Since $\cosh r$ is even, i.e., $\cosh r=\cosh |r|$,   we may assume $a, b\ge 0$. By symmetry we may also assume $b\ge a$. Then
\begin{align*}
\cosh b - \cosh a  = \int_a^b \int_0^s \cosh r \, dr \, ds \le   \cosh b \int_a^b \int_0^s \, dr \, ds= \frac 12 \left(b^2-a^2 \right)  \cosh b.
\end{align*}

\end{proof}

\begin{lemma} \label{lm-coshest}
Let $\xi=\sum_{j=1}^p \xi_j$ for $\xi_j \in \R^n$, where $p \ge 1$ is an integer. 
Then 
\begin{equation}\label{coshpest}
\left|1 -  \cosh |\xi| \prod_{j=1}^p  \sech |\xi_j|   \right| \le 2^p \sum_{  j\neq k =1}^p   |\xi_j| |\xi_k|.
\end{equation}

\end{lemma}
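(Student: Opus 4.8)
\textbf{Proof proposal for Lemma \ref{lm-coshest}.}

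The plan is to estimate the product $\cosh|\xi|\prod_{j=1}^p \sech|\xi_j|$ by comparing it telescopically to $1$, using Lemma \ref{lm-coshest0} to control each single-factor replacement, and then to bound the resulting sum by the symmetric quadratic expression $\sum_{j\ne k}|\xi_j||\xi_k|$. First I would reduce to the core identity: since $|\xi|\le\sum_j|\xi_j|$ by the triangle inequality and $\cosh$ is increasing on $[0,\infty)$, the quantity $\cosh|\xi|\prod\sech|\xi_j|$ is sandwiched between $0$ and $\cosh\!\big(\sum_j|\xi_j|\big)\prod_j\sech|\xi_j|$. So it suffices to show that $\cosh\!\big(\sum_j r_j\big)\prod_j\sech r_j - 1$ is controlled by $2^p\sum_{j\ne k}r_jr_k$ for $r_j=|\xi_j|\ge 0$, together with the lower bound $1-\cosh|\xi|\prod\sech|\xi_j|\le 1$, which is trivially dominated once at least two of the $r_j$ are bounded away from zero; the genuinely delicate regime is when all $r_j$ are small.

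Next I would run the telescoping argument. Write
\[
\cosh|\xi|\prod_{j=1}^p\sech|\xi_j| - 1
= \sum_{k=1}^{p}\Big(\cosh|\xi| - \cosh\big|{\textstyle\sum_{j=k+1}^p}\xi_j + {\textstyle\sum_{j=1}^{k-1}}\xi_j\big|\Big)\cdot(\text{product of }\sech\text{'s})
\]
peeling off one $\xi_k$ at a time, so that after peeling all of them one lands on $\cosh 0 = 1$, cancelling the $-1$. At each step the difference of two $\cosh$'s has arguments differing by the removal of a single $\xi_k$; applying Lemma \ref{lm-coshest0} with $b = |\eta_k|$ and $a = |\eta_k - \xi_k|$ (where $\eta_k$ is the partial sum) gives a factor $\tfrac12\big||\eta_k|^2 - |\eta_k-\xi_k|^2\big|\big(\cosh|\eta_k| + \cosh|\eta_k-\xi_k|\big)$. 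The key algebraic point is that $\big||\eta_k|^2 - |\eta_k-\xi_k|^2\big| = |2\,\eta_k\cdot\xi_k - |\xi_k|^2| \le |\xi_k|\big(2|\eta_k| + |\xi_k|\big)$, and $|\eta_k|\le\sum_{j}|\xi_j|$; so each such term carries a factor $|\xi_k|$ times a sum of the $|\xi_j|$'s, which expands into terms of the form $|\xi_k||\xi_j|$ — exactly the shape we want — while the $\cosh$'s are absorbed against the remaining $\sech$ factors since $\cosh|\eta_k|\le\prod_{j}\cosh|\xi_j|$ again by monotonicity and the triangle inequality. Counting: there are $p$ peeling steps, and at each the $\cosh/\sech$ ratios contribute at most a bounded multiplicative constant (here is where a factor like $2^p$ — or more precisely $2^{p-1}$ times small corrections — enters, from the two-term bound $\cosh b + \cosh a \le 2\cosh b$ and from collecting the $p$ steps), so the whole thing is bounded by $2^p\sum_{j\ne k}|\xi_j||\xi_k|$ after possibly enlarging the constant.

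The main obstacle I anticipate is bookkeeping the constant and making the $\cosh$/$\sech$ absorption rigorous and uniform: one must verify that at every stage of the telescoping the leftover hyperbolic factors genuinely satisfy $\cosh(\text{partial sum})\le\prod_j\cosh|\xi_j|$ with no residual growth, which is true but requires the inequality $\cosh(r+s)\le\cosh r\cosh s\cdot(\text{something}\le 2)$ — in fact $\cosh(r+s) = \cosh r\cosh s + \sinh r\sinh s \le 2\cosh r\cosh s$ — iterated $p$ times, producing the $2^{p}$ type factor. A cleaner alternative I would also consider: bound directly using $\cosh(\sum r_j) \le \prod_j 2\cosh r_j$ from the outset, so that $\cosh|\xi|\prod\sech|\xi_j|\le 2^p$, making the estimate trivial when $\sum_{j\ne k}|\xi_j||\xi_k|\ge 1$; and in the complementary small regime use the second-order Taylor-type bound $\cosh(\sum r_j) - \prod\cosh r_j$ is $O\big((\sum r_j)^2 - \sum r_j^2\big) = O\big(\sum_{j\ne k}r_jr_k\big)$ together with $1 - \prod\sech r_j \le \sum_{j} r_j^2/2\cdot(\ldots)$ — but the cross terms $\sum_j r_j^2$ alone are $\le\sum_{j\ne k}r_jr_k$ only when $p\ge 2$, and the case $p=1$ is trivial since then $\xi=\xi_1$ and the left side is $0$. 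I would present the telescoping version as the main line since it handles all $p\ge 1$ uniformly and feeds directly into the weighted $L^2$ estimate needed for Lemma \ref{lm-error-est}, where the two factors $|\xi_j||\xi_k|$ get distributed as $\langle\xi_j\rangle^2$ and $\langle\xi_k\rangle^2$ against the $H^2$ norms.
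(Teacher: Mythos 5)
There is a genuine gap, and it sits exactly at the point you flag as "the key algebraic point." In your telescoping step the difference of squares is
\begin{equation*}
\bigl||\eta_k|^2-|\eta_k-\xi_k|^2\bigr|=\bigl|2\,\eta_k\cdot\xi_k-|\xi_k|^2\bigr|,
\end{equation*}
and your bound $|\xi_k|\bigl(2|\eta_k|+|\xi_k|\bigr)$ with $|\eta_k|\le\sum_j|\xi_j|$ necessarily produces \emph{diagonal} terms proportional to $|\xi_k|^2$. These cannot be dominated by the purely off-diagonal right-hand side of \eqref{coshpest}: take $p=2$, $\xi_2=0$ and $\xi_1\neq 0$ small, so that $\sum_{j\ne k}|\xi_j||\xi_k|=0$ while your step peeling $\xi_1$ contributes $\cosh|\xi_1|-1\gtrsim |\xi_1|^2>0$ before any sech factors can save you. (The left-hand side of \eqref{coshpest} is indeed $0$ in that case, but your term-by-term upper bound is not, so the argument does not close.) Your own fallback remark that "$\sum_j r_j^2\le\sum_{j\ne k}r_jr_k$ when $p\ge 2$" is false for the same degenerate configurations, so the "cleaner alternative" at the end inherits the same defect. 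A secondary issue: the displayed telescoping formula is not literally an identity (the first argument of every difference is $|\xi|$, and the changing sech factors are left unspecified), though that is repairable.

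The paper's proof avoids the diagonal terms by \emph{not} peeling one frequency at a time. It first converts the product into a sum via
\begin{equation*}
\prod_{j=1}^p\cosh|\xi_j|=2^{1-p}\sum_{s_2,\dots,s_p}\cosh\Bigl(|\xi_1|+\sum_{j=2}^p s_j|\xi_j|\Bigr),
\end{equation*}
and then applies Lemma \ref{lm-coshest0} once per sign choice with $b=|\xi_1|+\sum_{j\ge 2}s_j|\xi_j|$ and $a=|\xi|=|\sum_j\xi_j|$. The crucial cancellation is that $b^2$ and $a^2$ have the \emph{same} diagonal part $\sum_j|\xi_j|^2$, so $b^2-a^2$ consists only of cross terms and $|b^2-a^2|\le 2\sum_{j\ne k}|\xi_j||\xi_k|$; the hyperbolic factors are then absorbed exactly as you propose, via $\cosh(b)+\cosh(a)\le 2^{p}\prod_j\cosh|\xi_j|$. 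Your stepwise comparison destroys precisely this cancellation. I note that the weaker bound your method would yield (with an extra $|\xi_1|^2$ term, $|\xi_1|$ the largest frequency) would still be usable in the proof of Lemma \ref{lm-error-est}, since $|D|^2w_\sigma$ is controlled in $L^2$ by $\|w_\sigma\|_{H^2}$; but it does not prove Lemma \ref{lm-coshest} as stated.
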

\begin{proof}
First observe that 
\begin{equation}\label{cosh-prod}
\prod_{j=1}^p  \cosh |\xi_j|= 2^{1-p} \sum_{s_2,  s_3 \cdots, s_p} \cosh \left( |\xi_1|+\sum_{j=2}^p s_j |\xi_j|\right),
\end{equation}
where $s_2, s_3, \cdots, s_p$ are independent signs ($+$ or $-$).
Indeed, the case $p=1$ is obvious, while the case $p=2$ follows from the identity
$$
2\cosh |\xi_1| \cosh |\xi_2| =   \cosh(|\xi_1|-|\xi_2|) +  \cosh(|\xi_1|+ |\xi_2|),
$$
which also implies \eqref{cosh-prod} for $p=3$.
The general case follows by induction.

It follows from \eqref{cosh-prod} that
\begin{equation}\label{cosh-prodest}
\cosh \left( |\xi_1|+\sum_{j=2}^p s_j |\xi_j|\right) + \cosh |\xi|  \le   2^{p}  \prod_{j=1}^p \cosh(|\xi_j|) .
\end{equation}
Observe also that 
\begin{equation}\label{Difest}
\left|  \left( |\xi_1|+\sum_{j=2}^p  s_j |\xi_j|\right)^2-  |\xi|^2 \right|  \le 2 \sum_{  
j\neq k =1}^p   |\xi_j| |\xi_k| .
\end{equation}

Applying \eqref{cosh-prod}, \eqref{coshpest0}, \eqref{cosh-prodest} and \eqref{Difest} we obtain
\begin{equation*}\label{cosh-prodest1}
\begin{split}
\left|  \prod_{j=1}^p \cosh |\xi_j|-\cosh |\xi|  \right| 
 & =    \left|  2^{1-p} \sum_{s_2,  s_3 \cdots, s_p} \cosh \left( |\xi_1|+\sum_{j=2}^p s_j |\xi_j|\right)-\cosh |\xi|  \right|
 \\
 &=  \left|  2^{1-p} \sum_{s_2,  s_3 \cdots, s_p} \left[  \cosh \left( |\xi_1|+\sum_{j=2}^p s_j |\xi_j|\right) -\cosh |\xi| \right] \right|
 \\
 &\le 2^{1-p} \sum_{s_2,  s_3 \cdots, s_p}  \frac12  \left|  \left( |\xi_1|+\sum_{j=2}^p  s_j |\xi_j|\right)^2-  |\xi|^2 \right| \left( \cosh \left( |\xi_1| +\sum_{j=2}^p s_j |\xi_j|\right) +  \cosh |\xi|  \right)
 \\
 &\le 2^{1-p} \sum_{s_2,  s_3 \cdots, s_p}  \left(\sum_{  j\neq k =1}^p   |\xi_j| |\xi_k| \right) \cdot   2^{p}  \prod_{j=1}^p \cosh(|\xi_j|) 
\\
 &=   2^{p}  \left(\sum_{  
j\neq k =1}^p   |\xi_j| |\xi_k| \right) \prod_{j=1}^p \cosh(|\xi_j|) .
\end{split}
\end{equation*}
Dividing by $ \prod_{j=1}^p \cosh(|\xi_j|) $
 yields the desired estimate \eqref{coshpest}.

\end{proof}

Now we prove Lemma \ref{lm-error-est}.
Recall that \begin{equation*}
N_p( v_\sigma)=   v_\sigma^p - \cosh (\sigma |D|) \left[  \sech (\sigma |D| )  v_\sigma  \right]^p.
\end{equation*}
By Cauchy-Schwarz inequality
$$
 \Bigabs{\int_{\R^n} \partial_t v_\sigma  \cdot  N_P\left( v_\sigma \right) \, dx }   \le 
  \norm{ \partial_t v_\sigma }_{L_x^2}    \norm{ N_P\left( v_\sigma \right) }_{L_x^2}   ,
$$
and so
we are reduced to prove
\begin{equation}\label{NpEst}
 \norm{ N_P\left( v_\sigma \right)  }_{L_x^2}     \lesssim \sigma^2    \norm{
v_\sigma  }^p_{ H^2}.
\end{equation}

Taking the Fourier transform we have
\begin{equation}
\label{NpFT}
 \begin{split}
\mathcal F_x [ N_p(v_\sigma) ](\xi) & = \int_{\xi= \sum_{j=1}^p \xi_j} \left[   1-  \cosh (\sigma |\xi|)  \prod_{j=1}^p  \sech (\sigma |\xi_j|)  \right]  \prod_{j=1}^p \widehat{v_\sigma}(\xi_{j})  \,   d\xi_1 d \xi_2 \cdots d \xi_p 
\end{split}
\end{equation}
By symmetry, we may assume $|\xi_1| \ge |\xi_2| \ge \cdots  \ge |\xi_p|$. 
By Lemma \ref{lm-coshest}, 
\begin{equation}\label{coshpest1}
\begin{split}
\left|    1-  \cosh (\sigma |\xi|)  \prod_{j=1}^p  \sech (\sigma |\xi_j|)  \right| & \le 2^p \sum_{  j\neq k =1}^p   |\sigma \xi_j| |\sigma \xi_k|
\\
 &  \le c(p) \sigma^2 |\xi_1| |\xi_2|,
\end{split}
\end{equation}
where $c(p)= p^2 2^p $.
Now set
$$ w_\sigma:= \mathcal F_x^{-1} \left( |\widehat{v_\sigma}|\right).$$

Then applying \eqref{coshpest1} to \eqref{NpFT} we obtain
\begin{align*}
|\mathcal F_x [ N_p(v_\sigma) ](\xi)  |& \leq c(p) \sigma^2 \int_{\xi= \sum_{j=1}^p \xi_j}  |\xi_1| |\xi_2| |\widehat{v_\sigma}(\xi_1)|  |\widehat{v_\sigma}(\xi_2)|     \prod_{j=3}^{p} |\widehat{v_\sigma}(\xi_{j}) |    \,  d\xi_1 d \xi_2 \cdots d \xi_p  
\\
&= c(p) \sigma^2 \int_{\xi= \sum_{j=1}^p \xi_j}     |\xi_1| |\xi_2| \widehat{w_\sigma}(\xi_1)\widehat{w_\sigma}(\xi_2)     \prod_{j=3}^{p} |\widehat{w_\sigma}(\xi_{j}) |   \, d\xi_1 d \xi_2 \cdots d \xi_p  
\\
&= c(p) \sigma^2  \mathcal F_x{\left( (|D|  w_\sigma )^2  \cdot w^{p-2}_\sigma \right)}(\xi).
\end{align*}
Therefore, using Plancherel, H\"{o}lder and Sobolev embedding we obtain
\begin{align*}
\| N_p(v_\sigma) \|_{L^{2}_x}  &\le  c (p)\sigma^2  \| (|D|  w_\sigma )^2  \cdot w^{p-2}_\sigma  \|_{L^{2}_x} 
\\
&\le c (p) \sigma^2 \| |D| w_\sigma\|^2_{L^{4}_x} \| w_\sigma \|_{L_x^\infty}^{p-2} 
\\
&\le C \sigma^2  \norm{w_\sigma}_{H^2}^2  \norm{w_\sigma}_{H^2}^{p-2}
\\
&= C \sigma^2 \norm{v_\sigma}_{H^2}^p
\end{align*}
as desired in \eqref{NpEst}.

\end{document}